\theoremstyle{plain}
\newtheorem{theorem}{Theorem}
\newtheorem{proposition}[theorem]{Proposition}
\newtheorem{corollary}[theorem]{Corollary}
\theoremstyle{definition}
\newtheorem{example}{Example}[section]
\newtheorem{remark}[theorem]{Remark}
\begin{document}

\setlength{\parskip}{0.4\baselineskip plus 2pt}
\setlength{\parindent}{0pt}

\def\spacingset#1{\renewcommand{\baselinestretch}%
{#1}\small\normalsize} \spacingset{1}

\title{\bf Intrinsic and Normal Mean Ricci Curvatures:\par
  A Bochner–Weitzenb\"ock Identity for Simple $d$-Vectors\thanks{Research reported in this publication was supported by grant number INV-048956 from the Gates Foundation.}}

\author{%
  Pawe\l{} Gajer$^{1}$\thanks{Corresponding author: pgajer@gmail.com}\; and Jacques Ravel$^{1}$%
}

\date{} 

\footnotetext[1]{Center for Advanced Microbiome Research and Innovation (CAMRI), Institute for Genome Sciences, and Department of Microbiology and Immunology, University of Maryland School of Medicine}

\maketitle

\begin{abstract}
We present a concise, coordinate‐free framework that packages pointwise curvature information into two elementary subspace averages attached to a $d$–plane $\Pi\subset T_pM$: the \emph{intrinsic mean Ricci}
\[
\overline{\mathrm{Ric}}_{\Pi}=\frac{2}{d}\sum_{1\le i<j\le d}K(e_i,e_j),
\]
and the \emph{normal (mixed) mean Ricci}
\[
\overline{\mathrm{Ric}}^{\perp}_{\Pi}=\frac{1}{d(n-d)}\sum_{i=1}^{d}\sum_{\alpha=1}^{n-d}K(e_i,n_\alpha).
\]
Via Jacobi–field expansions, these quantities appear as the $r^2/6$ coefficients
in the volume elements of (i) the intrinsic $(d\!-\!1)$–sphere inside $\Pi$ and
(ii) the normal $(n\!-\!d\!-\!1)$–sphere in $\Pi^\perp$, unifying classical
small‐sphere/ball and tube formulas. A key payoff is a “plug-and-play”
Bochner–Weitzenb\"ock identity for \emph{simple} $d$–vectors: if
$V=X_1\wedge\cdots\wedge X_d$ is orthonormal and $\Pi=\mathrm{span}\{X_i\}$,
then
\[
\langle \mathcal R_d V,V\rangle
= d(n-d)\,\overline{\mathrm{Ric}}^{\perp}_{\Pi}.
\]
This yields immediate analytic consequences: a Bochner vanishing criterion for
harmonic simple $d$–vectors under
$\inf_{\Pi}\overline{\mathrm{Ric}}^{\perp}_{\Pi}>0$, and a Lichnerowicz-type
eigenvalue lower bound
$\lambda\ge d(n-d)\inf_{\Pi}\overline{\mathrm{Ric}}^{\perp}_{\Pi}$ for simple
eigenfields.
\end{abstract}

\newpage

\if 0
\noindent
{\it Keywords: }
\fi

Scalar, sectional, and Ricci curvatures all appear as the quadratic defects in
Euclidean volume growth. In geodesic polar coordinates around $p\in(M^n,g)$ the
geodesic sphere $S_r(p)$ has $(n{-}1)$-dimensional volume
\begin{equation}\label{eq:geosphere}
\mathrm{vol}(S_{r}(p))=\omega_{n-1}\,r^{n-1}\Big(1-\tfrac{\mathrm{Scal}(p)}{6n}\,r^{2}+O(r^{4})\Big),
\end{equation}
where $\mathrm{Scal}(p)$ is the scalar curvature at $p$; see, e.g.,
\cite[Ch.~2--3]{gray2004tubes}, \cite[§6.1--6.2]{petersen2016riemannian},
\cite[Ch.~1--2]{cheeger1975comparison}. For a unit vector $u\in T_pM$ and the
geodesic $\gamma_u(t)=\exp_p(tu)$, the normal $(n{-}2)$-sphere bundle along
$\gamma_u$ has fiber volume element
\begin{equation}\label{eq:normal_sphere} dV_{S^{\,n-2}_{r,u^\perp}}=r^{n-2}\Big(1-\tfrac{\mathrm{Ric}(u,u)}{6}\,r^{2}+O(r^{4})\Big)\,dV_{S^{n-2}},
\end{equation}
where $\mathrm{Ric}(u,u)$ is the Ricci curvature in the direction $u$
\cite[Ch.~2--3]{gray2004tubes}, \cite[§6.1]{petersen2016riemannian}. If
$\Pi\subset T_pM$ is a two-dimensional subspace, then for small $r>0$ the
geodesic disk in $\Pi$ of radius $r$ has boundary length
\begin{equation}\label{eq:BDP}
L_{\Pi}(r)=2\pi r\Big(1-\tfrac{K(\Pi)}{6}\,r^{2}+O(r^{4})\Big),
\end{equation}
the classical Bertrand--Diguet--Puiseux formula, which measures the intrinsic
curvature of the geodesic disk $\exp_p(\Pi)$; see
\cite[§4--5]{docarmo1992riemannian}.

These examples illustrate two averaging patterns that generalize to all dimensions.

\medskip
\noindent\textbf{The intrinsic pattern.}
Fix a $d$-dimensional subspace $\Pi\subset T_pM$ and average sectional curvature
over all $2$-planes contained in $\Pi$. For $2\le d\le n$, the $(d{-}1)$-sphere
in $\Pi$ of radius $r$ has volume element \cite[Ch.~2--3]{gray2004tubes}:
\begin{equation}\label{eq:pi_sphere}
dV_{S^{\,d-1}_{r,\Pi}}=r^{d-1}\Big(1-\tfrac{r^{2}}{6}\,\mathrm{Ric}_{\Pi}(v,v)+O(r^{4})\Big)\,dV_{S^{d-1}_\Pi},
\end{equation}
where, for $v\in S^{d-1}_\Pi$ and any orthonormal basis $\{v,e_1,\ldots,e_{d-1}\}$ of $\Pi$,
\[
\mathrm{Ric}_{\Pi}(v,v)=\sum_{i=1}^{d-1}K(v,e_{i}).
\]
Averaging over $v$ gives the mean intrinsic Ricci curvature
\begin{equation}\label{eq:mean-ricci-defs}
\overline{\mathrm{Ric}}_{\Pi}
=\frac{1}{\omega_{d-1}}\!\int_{S^{d-1}_\Pi}\!\!\mathrm{Ric}_{\Pi}(v,v)\,dV_{S^{d-1}_\Pi}(v)
=\frac{2}{d}\sum_{1\le i<j\le d}K(e_{i},e_{j}),
\end{equation}
which reduces to $K(\Pi)$ when $d=2$ and to $\mathrm{Scal}(p)/n$ when $d=n$.

\noindent\textbf{The perpendicular pattern.}
Fix $\Pi$ and average over all $2$-planes spanned by one vector in $\Pi$ and one in $\Pi^{\perp}$. In analogy with $\mathrm{Ric}_\Pi(v,v)$, define for $u\in S^{d-1}_\Pi$ the \emph{normal} (mixed) Ricci
\[
\mathrm{Ric}^{\perp}_{\Pi}(u)=\sum_{\alpha=1}^{n-d}K(u,n_{\alpha}),
\]
where $\{n_1,\ldots,n_{n-d}\}$ is an orthonormal basis of $\Pi^\perp$. The normal $(n{-}d{-}1)$-sphere in $\Pi^\perp$ of radius $r$ has volume element
\begin{equation}\label{eq:normal_sphere_pi}
dV_{S^{\,n-d-1}_{r,\Pi^\perp}}=r^{n-d-1}\Big(1-\tfrac{r^{2}}{6}\,\mathrm{Ric}^{\perp}_{\Pi}(u)+O(r^{4})\Big)\,dV_{S^{n-d-1}},
\end{equation}
and averaging over $u$ defines the mean normal Ricci curvature
\begin{equation}\label{eq:mean-normal-ricci-def}
\overline{\mathrm{Ric}}^{\perp}_{\Pi}
=\frac{1}{\omega_{d-1}}\!\int_{S^{d-1}_\Pi}\!\!\mathrm{Ric}^{\perp}_{\Pi}(u)\,dV_{S^{d-1}_\Pi}(u)
=\frac{1}{d(n-d)}\sum_{i=1}^{d}\sum_{\alpha=1}^{n-d}K(e_{i},n_{\alpha}).
\end{equation}
This perpendicular mean is defined purely from the splitting
$T_pM=\Pi\oplus\Pi^{\perp}$ and does not require $\Pi$ to be tangent to a
submanifold. For $d=2$, $\overline{\mathrm{Ric}}_\Pi=K(\Pi)$, for $d=1$,
$\overline{\mathrm{Ric}}^{\perp}_{\Pi}=\mathrm{Ric}(u,u) / (n-1)$, and for $d=n$,
$\overline{\mathrm{Ric}}_\Pi=\mathrm{Scal}/n$.

Thus $\overline{\mathrm{Ric}}_{\Pi}$ is the \emph{intrinsic} mean of sectional curvatures within $\Pi$, while $\overline{\mathrm{Ric}}^{\perp}_{\Pi}$ is the \emph{normal (mixed)} mean across directions orthogonal to $\Pi$.
\begin{center}
  \includegraphics[scale=0.6]{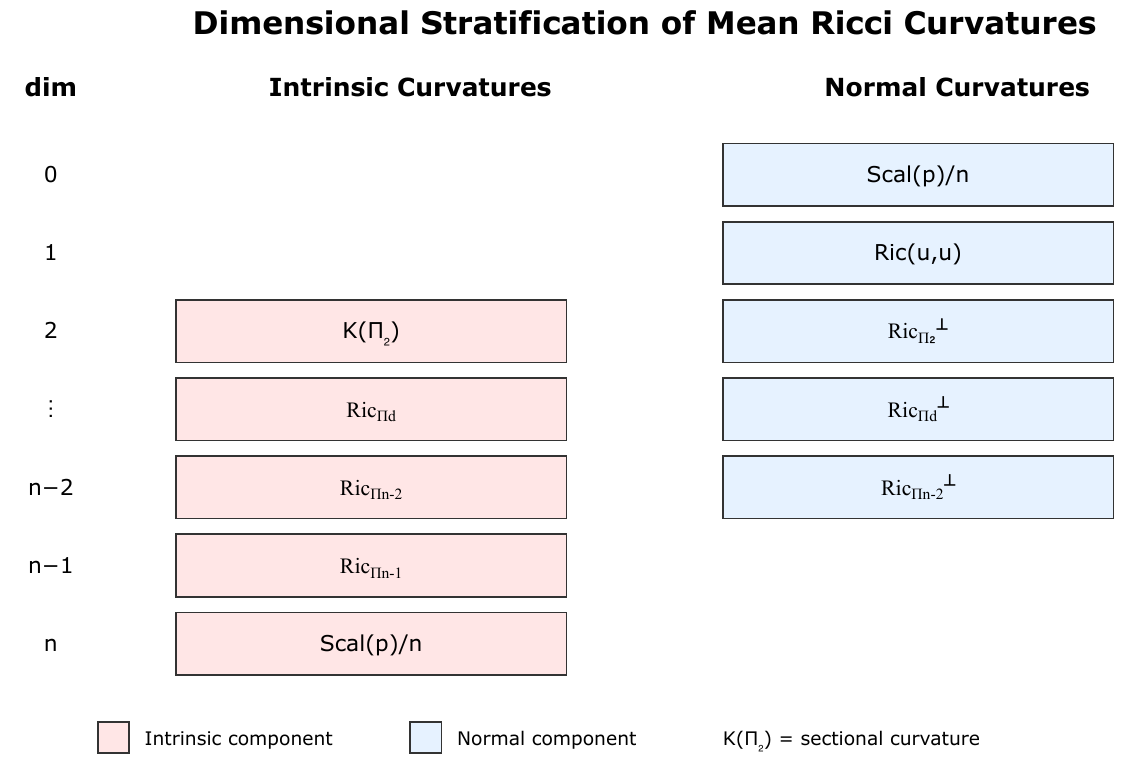}
  \captionof{figure}{Dimensional stratification of mean Ricci curvatures for
    linear subspaces $\Pi_d \subset T_pM$ of dimension $d$ in an $n$-dimensional
    Riemannian manifold. The intrinsic curvature $\overline{\text{Ric}}_{\Pi_d}$
    measures the mean of sectional curvatures within $\Pi_d$, while the normal
    curvature $\overline{\text{Ric}}_{\Pi_d}^\perp$ measures the mean of
    sectional curvatures for planes containing one direction in $\Pi_d$ and one
    in $\Pi_d^\perp$. Classical curvatures emerge as special cases: scalar
    curvature appears at dimensions 0 and $n$, Ricci curvature at dimension 1,
    and sectional curvature at dimension 2. The complementary pattern reflects
    the orthogonal decomposition
    $T_pM = \Pi_d \oplus \Pi_d^\perp$. \label{fig1}}
\end{center}
\noindent\textbf{Bochner--Weitzenb\"ock for simple $p$-vectors.}
Let
\[
\Delta_{H} = \nabla^*\nabla+\mathcal R_p
\]
denote the Hodge/Lichnerowicz Laplacian acting on $\Lambda^pTM$ (via the musical
isomorphism). For any bivector field $B$,
\[
\frac{1}{2}\,\Delta |B|^2
=\langle \nabla^*\nabla B,B\rangle - |\nabla B|^2 + \langle \mathcal R_2 B,B\rangle,
\]
where $\Delta$ on functions denotes the nonnegative Laplacian
$-\operatorname{div}\nabla$. If $B$ is \emph{simple and unit}, $B=X\wedge Y$
with $X\perp Y$ and $\Pi=\mathrm{span}\{X,Y\}$, then
\begin{equation}\label{eq:bochner-bivector}
\big\langle \mathcal R_2(X\wedge Y),\,X\wedge Y\big\rangle
=\mathrm{Ric}(X,X)+\mathrm{Ric}(Y,Y)-2K(\Pi)
=2(n-2)\,\overline{\mathrm{Ric}}^{\perp}_{\Pi},
\end{equation}
so
\[
\frac{1}{2}\,\Delta |X\wedge Y|^2
=\langle \nabla^*\nabla (X\wedge Y),\,X\wedge Y\rangle - |\nabla (X\wedge Y)|^2
+ 2(n-2)\,\overline{\mathrm{Ric}}^{\perp}_{\Pi}\,|X\wedge Y|^2,
\]
without unpacking the full curvature operator on $\Lambda^2$; see
\cite[§§2.1--2.3]{rosenberg1997laplacian}, \cite[§7.4]{petersen2016riemannian},
\cite{bochner1946curvature}. More generally, for any simple orthonormal
$d$-vector $V=X_1\wedge\cdots\wedge X_d$ spanning
$\Pi=\mathrm{span}\{X_1,\dots,X_d\}$ ($2\le d\le n-1$), the Weitzenböck
curvature term depends only on the mixed sectional curvatures:

\begin{proposition}[Bochner curvature term for simple $d$-vectors]\label{prop:Rd-simple}
  Let $(M^n,g)$ be a closed Riemannian manifold and fix $p\in M$. Let
  $\{X_1,\dots,X_d\}$ be an orthonormal basis of $\Pi\subset T_pM$, let
  $\{X_1,\dots,X_d,n_1,\dots,n_{n-d}\}$ be an orthonormal basis of $T_pM$, and
  let $V=X_1\wedge\cdots\wedge X_d\in\Lambda^dT_pM$. With
  $R(X,Y)Z=\nabla_X\nabla_YZ-\nabla_Y\nabla_XZ-\nabla_{[X,Y]}Z$, one has
\[
\big\langle \mathcal R_d V,\,V\big\rangle
=\sum_{i=1}^{d}\sum_{\alpha=1}^{n-d} K(X_i,n_\alpha)
=d(n-d)\,\overline{\mathrm{Ric}}^{\perp}_{\Pi}.
\label{eq:Rd-on-simple}
\]
Consequently,
\[
\frac12\,\Delta|V|^2
=\langle \nabla^*\nabla V,\,V\rangle - |\nabla V|^2
+ d(n-d)\,\overline{\mathrm{Ric}}^{\perp}_{\Pi}\,|V|^2 .
\label{eq:bochner-simple-d}
\]
\end{proposition}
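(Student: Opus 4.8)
The plan is to evaluate the Weitzenböck curvature term $\langle\mathcal R_d V,V\rangle$ in the split orthonormal frame $\{X_1,\dots,X_d,n_1,\dots,n_{n-d}\}$ and show that all contributions from $2$-planes lying inside $\Pi$ cancel, leaving exactly the mixed sectional sum; the Bochner identity for $|V|^2$ then follows from the general Bochner technique. Everything is pointwise at $p$ (closedness of $M$ matters only for the global corollaries), and it is convenient to pass to the dual unit $d$-form $\xi=X_1^\flat\wedge\cdots\wedge X_d^\flat$ via the parallel, isometric musical isomorphism, which intertwines $\mathcal R_d$ on $\Lambda^dT_pM$ with the Weitzenböck curvature operator on $\Lambda^dT_p^*M$; thus $\langle\mathcal R_dV,V\rangle=\langle\mathcal R_d\xi,\xi\rangle$ and $|V|=|\xi|=1$.

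First I would record the standard local form of the Weitzenböck curvature operator on $d$-forms: it is the derivation extension $q(R)$ of the curvature operator, i.e.
\[
\mathcal R_d=\tfrac14\sum_{a,b,c,f}\langle R(e_a,e_b)e_c,e_f\rangle\,(e_a\wedge e_b)_\bullet\,(e_c\wedge e_f)_\bullet ,
\]
where $e_a\wedge e_b$ is the skew-adjoint endomorphism $v\mapsto\langle e_b,v\rangle e_a-\langle e_a,v\rangle e_b$ and $(e_a\wedge e_b)_\bullet$ is its action on $\Lambda^d$ as a derivation (equivalently, the curvature $R(e_a,e_b)$ of the induced connection). This is the classical Bochner/Lichnerowicz curvature term; as a sanity check it restricts to $\mathrm{Ric}$ on $\Lambda^1$ and reproduces \eqref{eq:bochner-bivector} on $\Lambda^2$; see \cite[§7.4]{petersen2016riemannian}, \cite[§§2.1--2.3]{rosenberg1997laplacian}, \cite{bochner1946curvature}.

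The key structural point is that, because $V$ is decomposable with factors from the split frame, $(e_a\wedge e_b)_\bullet V=0$ unless exactly one of $e_a,e_b$ lies in $\Pi$ and the other in $\Pi^\perp$: if both lie in $\Pi$ the derivation acts trivially on the line $\Lambda^d\Pi$; if both lie in $\Pi^\perp$, every term of the derivation is killed by an interior product against some factor $X_m$. For a mixed pair one computes $(X_i\wedge n_\alpha)_\bullet V=-V_i^\alpha$, where $V_i^\alpha$ is $V$ with its $i$-th factor replaced by $n_\alpha$, and the family $\{V_i^\alpha\}_{1\le i\le d,\,1\le\alpha\le n-d}$ is orthonormal in $\Lambda^dT_pM$. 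Substituting into the quadratic form, using skew-adjointness of $(e_a\wedge e_b)_\bullet$ together with $\langle V_i^\alpha,V_j^\beta\rangle=\delta_{ij}\delta_{\alpha\beta}$, the sum collapses to the ``diagonal'' mixed contractions, and the curvature symmetries reduce each $\langle R(X_i,n_\alpha)\,\cdot\,,\,\cdot\,\rangle$ to $\pm K(X_i,n_\alpha)$; adding the four terms attached to each pair $(i,\alpha)$ gives $\langle\mathcal R_dV,V\rangle=\sum_{i=1}^d\sum_{\alpha=1}^{n-d}K(X_i,n_\alpha)$, which by \eqref{eq:mean-normal-ricci-def} equals $d(n-d)\,\overline{\mathrm{Ric}}^\perp_\Pi$. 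The second display is then immediate from the pointwise Bochner–Weitzenböck identity $\tfrac12\Delta|V|^2=\langle\nabla^*\nabla V,V\rangle-|\nabla V|^2+\langle\mathcal R_dV,V\rangle$, valid for any section of $\Lambda^dTM$, upon substituting the value just computed.

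The main obstacle is the bookkeeping in the curvature term: fixing the normalization and sign of $\mathcal R_d=q(R)$, confirming that only the mixed diagonal contractions survive on the decomposable $V$, and verifying that the four terms per pair $(i,\alpha)$ combine with the correct signs into $+K(X_i,n_\alpha)$ (equivalently, that the $d=2$ case recovers $\mathrm{Ric}(X,X)+\mathrm{Ric}(Y,Y)-2K(\Pi)$). A route that avoids the $q(R)$ formalism entirely uses the ``Ricci on single slots minus Riemann on pairs of slots'' form of the Weitzenböck operator: the single-slot part contributes $\sum_{i=1}^d\mathrm{Ric}(X_i,X_i)$ and the pair part contributes $2\sum_{1\le i<j\le d}K(X_i,X_j)$; since $\mathrm{Ric}(X_i,X_i)=\sum_{j\ne i}K(X_i,X_j)+\sum_{\alpha}K(X_i,n_\alpha)$, the intra-$\Pi$ terms cancel and one is again left with $\sum_{i,\alpha}K(X_i,n_\alpha)$ — the direct generalization of \eqref{eq:bochner-bivector}.
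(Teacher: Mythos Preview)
Your proposal is correct, and in fact contains two proofs. The alternative route you sketch in the last paragraph---using the ``Ricci on single slots minus Riemann on pairs of slots'' form of $\mathcal R_d$, obtaining $\sum_{i}\mathrm{Ric}(X_i,X_i)-2\sum_{i<j}K(X_i,X_j)$, and then cancelling the intra-$\Pi$ sectional curvatures against the $\Pi$-part of each $\mathrm{Ric}(X_i,X_i)$---is exactly the paper's proof (Appendix~A, via the indexed Weitzenb\"ock formula~\eqref{eq:Weitzenbock-p}).

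Your primary route via the derivation extension $q(R)$ is a genuinely different argument. Rather than producing the Ricci and intra-$\Pi$ sectional pieces separately and then watching them cancel, you observe structurally that $(e_a\wedge e_b)_\bullet V=0$ unless $\{e_a,e_b\}$ is a mixed pair, because $\Lambda^d\Pi$ is one-dimensional and annihilated by $\mathfrak{so}(\Pi)$ while $\mathfrak{so}(\Pi^\perp)$ kills every factor of $V$. Thus only the mixed sectionals ever enter the quadratic form $\langle\mathcal R_dV,V\rangle$, and the orthonormality of $\{V_i^\alpha\}$ forces the sum to be diagonal. This is conceptually cleaner---no Ricci appears, no cancellation is needed---and it makes transparent \emph{why} the answer involves only $\Pi\wedge\Pi^\perp$. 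The cost, as you acknowledge, is the sign/normalization bookkeeping in the $q(R)$ formula; the paper's indexed approach makes the constant $1$ fall out mechanically at the price of the cancellation step.
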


\noindent In particular, for $d=2$ this recovers \eqref{eq:bochner-bivector}. See \cite[§1.C]{Besse} and \cite[§7.4]{petersen2016riemannian} for background on the Weitzenböck curvature endomorphism on $\Lambda^d$.

\medskip
\noindent\textbf{Immediate applications.}
Let $2\le d\le n-1$ and set
\[
  \kappa_d \;:=\; d(n-d)\,\inf_{\Pi\in G_d(T_pM),\,p\in M}\,\overline{\mathrm{Ric}}^{\perp}_{\Pi}.
\]

\begin{corollary}[Bochner vanishing for simple $d$-vectors]\label{cor:vanishing-simple}
  If $\kappa_d>0$ on a closed Riemannian manifold $(M^n,g)$, then there are no
  nonzero harmonic simple $d$-vector fields.
\end{corollary}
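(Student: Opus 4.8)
I would run the classical integrated Bochner vanishing argument, with Proposition~\ref{prop:Rd-simple} supplying pointwise nonnegativity of the Weitzenb\"ock term. Let $V$ be a harmonic simple $d$-vector field on the closed manifold $(M^n,g)$, i.e.\ $\Delta_H V=\nabla^*\nabla V+\mathcal R_d V=0$ and $V_p\in\Lambda^dT_pM$ is decomposable for every $p\in M$. Pairing $\Delta_H V=0$ with $V$, integrating over $M$ (no boundary term, since $\partial M=\emptyset$), and using the self-adjointness of $\nabla^*\nabla$ gives the $L^2$ Weitzenb\"ock identity
\[
0=\langle \Delta_H V,V\rangle_{L^2}=\|\nabla V\|_{L^2}^2+\int_M\langle \mathcal R_d V,V\rangle .
\]

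The second ingredient is the pointwise estimate $\langle\mathcal R_d V,V\rangle\ge\kappa_d\,|V|^2$ on all of $M$. At a point $p$ with $V_p\ne 0$, write $V_p=|V_p|\,\widehat V_p$ with $\widehat V_p=X_1\wedge\cdots\wedge X_d$ orthonormal, and put $\Pi_p=\mathrm{span}\{X_1,\dots,X_d\}\in G_d(T_pM)$. Since $\mathcal R_d$ acts as a quadratic form, Proposition~\ref{prop:Rd-simple} applied to the \emph{unit} simple $d$-vector $\widehat V_p$ gives
\[
\langle\mathcal R_d V_p,V_p\rangle=|V_p|^2\,d(n-d)\,\overline{\mathrm{Ric}}^{\perp}_{\Pi_p}\ \ge\ \kappa_d\,|V_p|^2 ,
\]
the inequality being the very definition of $\kappa_d$ as an infimum of $d(n-d)\,\overline{\mathrm{Ric}}^{\perp}_{\Pi}$ over $\Pi\in G_d(T_pM)$ and $p\in M$; at points with $V_p=0$ both sides vanish, so the bound holds everywhere.

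Combining the two displays,
\[
0=\|\nabla V\|_{L^2}^2+\int_M\langle\mathcal R_d V,V\rangle\ \ge\ \|\nabla V\|_{L^2}^2+\kappa_d\,\|V\|_{L^2}^2\ \ge\ \kappa_d\,\|V\|_{L^2}^2 ,
\]
and since $\kappa_d>0$ by hypothesis this forces $\|V\|_{L^2}=0$, i.e.\ $V\equiv 0$. (Alternatively one can argue pointwise from the second display of Proposition~\ref{prop:Rd-simple}: harmonicity reduces it to $\tfrac12\Delta|V|^2=-|\nabla V|^2\le 0$, so the smooth function $|V|^2$ is subharmonic, hence constant on the closed $M$; then $\nabla V\equiv 0$, and feeding $\nabla V=0$ back in gives $0=d(n-d)\,\overline{\mathrm{Ric}}^{\perp}_{\Pi}\,|V|^2\ge\kappa_d\,|V|^2$, again forcing $V\equiv 0$.)

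I do not expect a genuine obstacle here: this is the textbook Bochner mechanism, and the only point deserving a line of care is the reduction to Proposition~\ref{prop:Rd-simple}, which is stated for a \emph{fixed} orthonormal decomposable $d$-vector. One normalizes $V_p$ to unit length — legitimate since decomposability and the Weitzenb\"ock quadratic form are both homogeneous under scaling — and separately notes that the curvature term is identically zero on the zero set of $V$; with that the argument is complete.
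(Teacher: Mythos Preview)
Your argument is correct and matches the paper's proof essentially line for line: integrate the Weitzenb\"ock identity against $V$, use adjointness to get $0=\|\nabla V\|_{L^2}^2+\int_M\langle\mathcal R_dV,V\rangle$, bound the curvature term below by $\kappa_d|V|^2$ via Proposition~\ref{prop:Rd-simple} after normalizing $V$ to a unit simple $d$-vector, and conclude $V\equiv 0$ from $\kappa_d>0$. Your explicit handling of the zero set of $V$ and the parenthetical subharmonic variant are sound additions, but the core argument is identical to the paper's.
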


\begin{proof}
Let $V$ be a smooth simple $d$-vector field on a closed Riemannian manifold $(M^n,g)$ with $\Delta_H V=0$, where
\[
\Delta_H V=\nabla^*\nabla V+\mathcal R_d V
\]
is the Hodge/Lichnerowicz Laplacian on $\Lambda^d TM$. Take the $L^2$ inner product with $V$ and integrate:
\[
0=\int_M \langle \Delta_H V, V\rangle
 = \int_M \langle \nabla^*\nabla V, V\rangle + \int_M \langle \mathcal R_d V, V\rangle.
\]
By adjointness on a closed manifold,
\[
\int_M \langle \nabla^*\nabla V, V\rangle = \int_M |\nabla V|^2 = \|\nabla V\|_{L^2}^2,
\]
hence
\[
0 = \|\nabla V\|_{L^2}^2 + \int_M \langle \mathcal R_d V, V\rangle.
\tag{$\ast$}
\]
If $V$ is pointwise simple, write $V=|V|\,U$ with $U$ unit simple. Then Proposition~\ref{prop:Rd-simple} yields
\[
\langle \mathcal R_d V, V\rangle
= |V|^2\,\langle \mathcal R_d U, U\rangle
= d(n-d)\,\overline{\mathrm{Ric}}^{\perp}_{\Pi_U}\,|V|^2
\;\ge\; \kappa_d\,|V|^2,
\]
where $\displaystyle \kappa_d:= d(n-d)\,\inf_{p\in M}\ \inf_{\Pi\in G_d(T_pM)} \overline{\mathrm{Ric}}^{\perp}_{\Pi}$.
Plugging this into $(\ast)$ gives
\[
0 \;\ge\; \|\nabla V\|_{L^2}^2 + \kappa_d \|V\|_{L^2}^2.
\]
If $\kappa_d>0$ then both terms must vanish, so $V\equiv 0$.
\end{proof}

\begin{proposition}[Lichnerowicz–type lower bound for simple $d$-vector eigenfields]\label{prop:lichnerowicz-simple}
  Let $(M^n,g)$ be closed Riemannian manifold and let
  $$
  \kappa_d:=d(n-d)\,\inf_{p\in M}\ \inf_{\Pi\in G_d(T_pM)}\overline{\mathrm{Ric}}^{\perp}_{\Pi}.
  $$
  If $V\not\equiv 0$ is a smooth \emph{pointwise simple} $d$-vector field satisfying
  \[
    \Delta_H V=\lambda V \qquad\text{on }\Lambda^d TM,
  \]
  then
  \[
    \lambda \;\ge\; \kappa_d.
  \]
  Equivalently, for every smooth pointwise simple $d$-vector field $V$,
  \[
    \frac{\displaystyle \int_M \langle \Delta_H V,\,V\rangle}{\displaystyle \int_M |V|^2}
    \;\ge\; \kappa_d.
  \]
\end{proposition}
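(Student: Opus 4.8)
The plan is to reduce the eigenvalue bound to the integrated Bochner identity exactly as in the proof of Corollary~\ref{cor:vanishing-simple}, with the eigenvalue replacing the zero on the left-hand side. First I would take the $L^2$ pairing of the eigenvalue equation $\Delta_H V=\lambda V$ with $V$ over the closed manifold $M$, obtaining
\[
\lambda\int_M|V|^2=\int_M\langle\Delta_H V,V\rangle=\int_M\langle\nabla^*\nabla V,V\rangle+\int_M\langle\mathcal R_d V,V\rangle.
\]
Then, by adjointness of $\nabla^*$ against $\nabla$ on a closed manifold, the first term on the right equals $\|\nabla V\|_{L^2}^2\ge 0$.

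Next I would invoke the pointwise simplicity hypothesis: at each $p$ where $V(p)\neq 0$, write $V=|V|\,U$ with $U$ a unit simple $d$-vector spanning some $\Pi_U\in G_d(T_pM)$, so that Proposition~\ref{prop:Rd-simple} gives
\[
\langle\mathcal R_d V,V\rangle=|V|^2\langle\mathcal R_d U,U\rangle=d(n-d)\,\overline{\mathrm{Ric}}^{\perp}_{\Pi_U}\,|V|^2\ge\kappa_d\,|V|^2,
\]
and this inequality holds trivially (as equality $0=0$) where $V(p)=0$. Integrating over $M$ yields $\int_M\langle\mathcal R_d V,V\rangle\ge\kappa_d\int_M|V|^2$. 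Combining the two estimates gives
\[
\lambda\int_M|V|^2=\|\nabla V\|_{L^2}^2+\int_M\langle\mathcal R_d V,V\rangle\ge\kappa_d\int_M|V|^2,
\]
and since $V\not\equiv0$ on the closed manifold we have $\int_M|V|^2>0$, so dividing gives $\lambda\ge\kappa_d$. The Rayleigh-quotient reformulation follows from the same chain of (in)equalities applied to an arbitrary smooth pointwise simple $V$, without using that $V$ is an eigenfield.

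I do not expect any serious obstacle: the argument is a verbatim adaptation of Corollary~\ref{cor:vanishing-simple}, and the only point requiring a word of care is that $U$ (hence $\Pi_U$) is only measurably defined on the open set $\{V\neq0\}$ and the integrand $\langle\mathcal R_d V,V\rangle$ is globally smooth, so the pointwise bound by $\kappa_d|V|^2$ extends across the zero set by continuity; the infimum defining $\kappa_d$ is taken over all of $G_d(T_pM)$ and all $p\in M$, so it dominates $\overline{\mathrm{Ric}}^{\perp}_{\Pi_U(p)}$ wherever the latter is defined. One should also note that $\kappa_d$ may be negative, in which case the bound is still valid but vacuous as a positivity statement; no sign hypothesis is needed for the inequality itself.
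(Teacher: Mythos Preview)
Your proposal is correct and follows the paper's proof essentially verbatim: pair the eigenvalue equation with $V$, use adjointness to get $\|\nabla V\|_{L^2}^2$, apply Proposition~\ref{prop:Rd-simple} pointwise for the curvature term, and divide. Your added remarks about extending the pointwise bound across the zero set of $V$ and about the sign of $\kappa_d$ are sound (though note the minor slip in wording: the infimum is \emph{dominated by}, not ``dominates,'' each $\overline{\mathrm{Ric}}^{\perp}_{\Pi_U(p)}$).
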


\begin{proof}
Pair $\Delta_H V=\lambda V$ with $V$ and integrate:
\[
\lambda \|V\|_{L^2}^2 \;=\; \int_M \langle \Delta_H V, V\rangle
= \int_M \langle \nabla^*\nabla V, V\rangle + \int_M \langle \mathcal R_d V, V\rangle
= \|\nabla V\|_{L^2}^2 + \int_M \langle \mathcal R_d V, V\rangle,
\]
using adjointness on a closed manifold. If $V=|V|\,U$ with $U$ unit and simple pointwise, Proposition~\ref{prop:Rd-simple} gives
\[
\langle \mathcal R_d V, V\rangle = |V|^2\,\langle \mathcal R_d U, U\rangle
= d(n-d)\,\overline{\mathrm{Ric}}^{\perp}_{\Pi_U}\,|V|^2 \;\ge\; \kappa_d\,|V|^2,
\]
hence
\[
\lambda \|V\|_{L^2}^2 \;\ge\; \|\nabla V\|_{L^2}^2 + \kappa_d \|V\|_{L^2}^2 \;\ge\; \kappa_d \|V\|_{L^2}^2.
\]
Dividing by \(\|V\|_{L^2}^2\) yields \(\lambda\ge \kappa_d\).
\end{proof}

\begin{remark}
If equality $\lambda=\kappa_d$ holds, then necessarily $\nabla V\equiv 0$ and
$\langle \mathcal R_d U,U\rangle \equiv \kappa_d$ for the unit simple field $U=V/|V|$ wherever $V\neq 0$.
\end{remark}

\begin{remark}
  These statements are \emph{restricted to simple fields}. A global lower bound
  for the full Hodge Laplacian on $\Lambda^d$ requires a pointwise operator
  inequality $\langle \mathcal R_d w,w\rangle \ge \kappa\,|w|^2$ for all
  $w\in\Lambda^d$, which is stronger than positivity of the mean normal Ricci on
  planes.
\end{remark}

\begin{remark}[Dictionary: curvature-operator traces]
Let $\mathcal R:\Lambda^2 T_pM\to\Lambda^2 T_pM$ be the curvature operator determined by
\[
\langle \mathcal R(u\wedge v),\,w\wedge z\rangle \;=\; R(u,v,w,z).
\]
If $\Pi\subset T_pM$ has $\dim\Pi=d$ and orthonormal bases $\{e_1,\dots,e_d\}$ of $\Pi$ and $\{n_1,\dots,n_{n-d}\}$ of $\Pi^\perp$, then
\[
\operatorname{tr}\!\left(\mathcal R\big|_{\Lambda^2\Pi}\right)
=\sum_{1\le i<j\le d} K(e_i,e_j),\qquad
\operatorname{tr}\!\left(\mathcal R\big|_{\Pi\wedge \Pi^\perp}\right)
=\sum_{i=1}^{d}\sum_{\alpha=1}^{n-d} K(e_i,n_\alpha).
\]
Hence
\[
\overline{\mathrm{Ric}}_{\Pi}=\frac{2}{d}\,\operatorname{tr}\!\left(\mathcal R\big|_{\Lambda^2\Pi}\right),\qquad
\overline{\mathrm{Ric}}^{\perp}_{\Pi}=\frac{1}{d(n-d)}\,\operatorname{tr}\!\left(\mathcal R\big|_{\Pi\wedge \Pi^\perp}\right),
\]
which are basis-independent and depend only on the splitting $T_pM=\Pi\oplus\Pi^\perp$.
\end{remark}

\paragraph{Tube coefficients and the averaged curvatures.}
As a companion to the small-sphere densities \eqref{eq:pi_sphere}–\eqref{eq:normal_sphere_pi}, the same Jacobi–field/Jacobian expansions enter classical tube–volume formulas. If $P^d\subset M^n$ is an embedded submanifold with tangent spaces $\Pi=T_pP$ and normal bundle $\nu(P)$, then for $0<r<\operatorname{inj}_\nu(P)$ the volume of the tube of radius $r$ about $P$ has the expansion
\[
\mathrm{vol}(\mathrm{Tube}_{r}(P))=\sum_{j=0}^{\infty} c_{j}(P)\, r^{\,n-d+j}.
\]
The leading term $c_0(P)=\omega_{n-d}\,\mathrm{vol}(P)$ is Euclidean, and $c_1(P)$ is linear in the mean curvature (vanishing for symmetric two–sided tubes or for minimal $P$). A local Jacobian computation along normal geodesics (cf.\ \cite{weyl1939volume,gray1990tubes,gray2004tubes}) shows that the first curvature–dependent correction is
\begin{equation}\label{eq:c2decomp}
c_{2}(P)
= -\frac{\omega_{n-d}}{6}\!\int_{P}\!\big[\,\alpha_{n,d}\,\overline{\mathrm{Ric}}_{\Pi}
+\beta_{n,d}\,\overline{\mathrm{Ric}}^{\perp}_{\Pi}
+\gamma_{n,d}\,\|A\|^{2}
+\delta_{n,d}\,\|H\|^{2}\,\big]\;dV_{P},
\end{equation}
where $A$ is the second fundamental form and $H$ the mean–curvature vector; the constants $\alpha_{n,d},\beta_{n,d},\gamma_{n,d},\delta_{n,d}$ depend only on $(n,d)$ (see \cite{gray2004tubes}). In the totally geodesic case ($A\equiv0$), only the intrinsic and normal averaged Ricci terms remain. In a space form of sectional curvature $\kappa$,
\[
\overline{\mathrm{Ric}}_{\Pi}=(d-1)\,\kappa,\qquad
\overline{\mathrm{Ric}}^{\perp}_{\Pi}=\kappa,\qquad
\sum_{i=1}^{d}\sum_{\alpha=1}^{n-d}\!K(e_i,n_\alpha)=d(n-d)\,\kappa,
\]
so \eqref{eq:c2decomp} reduces to a linear combination of $(d-1)\kappa$ and $\kappa$ (plus $A,H$ if present).

\begin{example}[Space forms]
Let $(M^n,g)$ be a space form with constant sectional curvature $\kappa$. For any $d$-dimensional subspace $\Pi\subset T_pM$,
\[
\overline{\mathrm{Ric}}_{\Pi}
=\frac{2}{d}\!\!\sum_{1\le i<j\le d}\!K(e_i,e_j)
=\frac{2}{d}\cdot\binom{d}{2}\kappa
=(d-1)\kappa,
\]
and
\[
\overline{\mathrm{Ric}}^{\perp}_{\Pi}
=\frac{1}{d(n-d)}\sum_{i=1}^d\sum_{\alpha=1}^{n-d}K(e_i,n_\alpha)
=\frac{1}{d(n-d)}\cdot d(n-d)\,\kappa
=\kappa.
\]
\end{example}

\begin{example}[Heisenberg group]
Let $\mathbb{H}^3$ be the (three-dimensional) Heisenberg group with a left-invariant metric making $\{X,Y,Z\}$ orthonormal and $[X,Y]=Z$ (others zero). The sectional curvatures are
\[
K(X,Y)=-\tfrac{3}{4},\qquad K(X,Z)=K(Y,Z)=\tfrac{1}{4}.
\]
For the $2$-plane $\Pi=\mathrm{span}\{X,Y\}$,
\[
\overline{\mathrm{Ric}}_{\Pi}=K(X,Y)=-\tfrac{3}{4},\qquad
\overline{\mathrm{Ric}}^{\perp}_{\Pi}
=\frac{1}{2}\Big(K(X,Z)+K(Y,Z)\Big)
=\frac{1}{2}\!\left(\tfrac{1}{4}+\tfrac{1}{4}\right)
=\tfrac{1}{4}.
\]
\end{example}

\begin{example}[Complex projective space $\mathbb{CP}^n$ (Fubini--Study), intrinsic/normal means and Bochner bound]
We normalize the Fubini--Study metric so that the \emph{holomorphic sectional curvature} equals $4$.
On a complex space form with this normalization, the curvature tensor is
{\footnotesize
\[
R(X,Y,Z,W)=\langle X,W\rangle\langle Y,Z\rangle-\langle X,Z\rangle\langle Y,W\rangle
+\langle JX,W\rangle\langle JY,Z\rangle-\langle JX,Z\rangle\langle JY,W\rangle
-2\,\langle JX,Y\rangle\langle JZ,W\rangle .
\tag{$\ast$}
\]}
\medskip
\noindent\emph{K\"ahler angle formula.}
Let $\sigma=\mathrm{span}\{u,v\}$ be a real $2$-plane with $u,v$ orthonormal and K\"ahler angle $\varphi$ defined by
$\cos\varphi:=|\langle Ju,v\rangle|$.
Then, using $(\ast)$ with $Z=v$, $W=u$,
\[
\begin{aligned}
K(\sigma)=R(u,v,v,u)
&= \underbrace{\langle u,u\rangle\langle v,v\rangle-\langle u,v\rangle^2}_{=\,1}
+ \underbrace{\langle Ju,u\rangle\langle Jv,v\rangle-\langle Ju,v\rangle\langle Jv,u\rangle}_{=\,-\langle Ju,v\rangle\langle -Jv,u\rangle=\langle Ju,v\rangle^2} \\
&\quad - 2\,\langle Ju,v\rangle \langle Jv,u\rangle
= 1 + \langle Ju,v\rangle^2 - 2\,\big(-\langle Ju,v\rangle^2\big) \\
&= 1 + 3\,\langle Ju,v\rangle^2 \;=\; 1+3\cos^2\varphi.
\end{aligned}
\]
Hence $K(\sigma)\in[1,4]$, with $K=4$ for $J$-invariant (holomorphic) planes and $K=1$ for totally real planes.

\medskip
\noindent\emph{Intrinsic mean on a complex $k$-plane.}
Let $\Pi\subset T_p\mathbb{CP}^n$ be $J$-invariant of real dimension $2k$ ($1\le k\le n$). Choose a $J$-adapted orthonormal basis
$\{e_1,Je_1,\dots,e_k,Je_k\}$ of $\Pi$.
Among the $\binom{2k}{2}$ unordered pairs:
exactly $k$ are holomorphic planes $\mathrm{span}\{e_i,Je_i\}$ (each $K=4$) and the remaining $2k(k-1)$ are totally real (each $K=1$).
Therefore
\[
\sum_{1\le a<b\le 2k} K(e_a,e_b)= 4k + 2k(k-1)=2k(k+1).
\]
Averaging (using $\overline{\mathrm{Ric}}_\Pi=\tfrac{2}{2k}\sum_{a<b}K(e_a,e_b)$) gives
\[
\boxed{\;\overline{\mathrm{Ric}}_\Pi=2(k+1). \;}
\]
In particular, for $k=1$ (complex lines) $\overline{\mathrm{Ric}}_\Pi=4$, and for $k=n$ one gets $\overline{\mathrm{Ric}}_\Pi=2(n+1)=\mathrm{Scal}/(2n)$ (since $\mathrm{Ric}=2(n+1)g$).

\medskip
\noindent\emph{Normal mean for a complex $k$-plane.}
Let $\Pi^\perp$ be the $J$-invariant orthogonal complement (real dimension $2(n-k)$). For $v\in\Pi$ and any $n\in\Pi^\perp$ we have $\langle Jv,n\rangle=0$ (because $Jv\in\Pi$), so every mixed plane $\mathrm{span}\{v,n\}$ is totally real, hence $K(v,n)=1$. Thus
\[
\boxed{\;\overline{\mathrm{Ric}}^{\perp}_\Pi=1. \;}
\]
Consequently, for a simple unit $d$-vector spanning a complex $k$-plane ($d=2k$),
\[
\big\langle \mathcal R_{2k}V,V\big\rangle
= d\,(2n-d)\,\overline{\mathrm{Ric}}^{\perp}_\Pi
= d(2n-d).
\]

\medskip
\noindent\emph{Normal mean for a totally real $d$-plane.}
Let $\Pi$ be totally real of real dimension $d$ ($1\le d\le n$), with orthonormal basis $\{e_1,\dots,e_d\}$, and extend to an orthonormal basis of $\Pi^\perp$ by
\[
\{Je_1,\dots,Je_d\}\ \cup\ \{f_1,Jf_1,\dots,f_{n-d},Jf_{n-d}\},
\]
where $N:=\mathrm{span}\{f_\beta,Jf_\beta\}$ is the $J$-invariant complement of $\Pi\oplus J\Pi$.
For fixed $i$,
\[
K(e_i,Je_i)=4,\qquad
K(e_i,Je_j)=1\ (j\ne i),\qquad
K(e_i,f_\beta)=K(e_i,Jf_\beta)=1.
\]
Summing over all normals and then over $i$,
\[
\sum_{i=1}^d\sum_{\alpha=1}^{2n-d} K(e_i,n_\alpha)
= d\big(4+(d-1)+2(n-d)\big)=d\,(2n-d+3).
\]
Hence
\[
\boxed{\;\overline{\mathrm{Ric}}^{\perp}_\Pi
=\frac{1}{d(2n-d)}\sum_{i,\alpha}K(e_i,n_\alpha)
= 1+\frac{3}{\,2n-d\,}. \;}
\]
For $d=1$ this gives $\overline{\mathrm{Ric}}^{\perp}_\Pi=\tfrac{2(n+1)}{2n-1}=\mathrm{Ric}(u,u)/(2n-1)$, consistent with $\mathrm{Ric}=2(n+1)g$.

\medskip
\noindent\emph{Bochner lower bound for simple eigenfields.}
If $V\not\equiv0$ is a \emph{simple} $d$-vector eigenfield on $(\mathbb{CP}^n,g_{FS})$,
$\Delta_H V=\lambda V$, and $V$ spans a complex $k$-plane ($d=2k$), then by $\overline{\mathrm{Ric}}^{\perp}_\Pi=1$ and Proposition~\ref{prop:Rd-simple},
\[
\boxed{\;\lambda \;\ge\; d(2n-d). \;}
\]
In particular, every harmonic simple $d$-vector tangent to a complex $k$-plane vanishes unless $d\in\{0,2n\}$.
\end{example}

\begin{example}[Surfaces of revolution in $\mathbb{R}^{3}$]
Let $S$ be a surface of revolution parametrized by
\[
x(u,v)=(r(u)\cos v,\; r(u)\sin v,\; z(u)).
\]
At $(u_0,v_0)$ the principal curvatures are
\[
\kappa_1=\frac{z''(u_0)}{(1+z'(u_0)^2)^{3/2}},\qquad
\kappa_2=\frac{z'(u_0)}{r(u_0)\,(1+z'(u_0)^2)^{1/2}},
\]
so the Gaussian curvature is $K=\kappa_1\kappa_2$. Since $d=2$, the intrinsic mean equals the sectional curvature:
\[
\overline{\mathrm{Ric}}_{\Pi}=K\quad\text{for }\Pi=T_pS.
\]
Examples:
\begin{itemize}
\item Cylinder: $r(u)=R$, $z(u)=u$ $\Rightarrow$ $K=0$.
\item Sphere of radius $R$: $r(u)=R\sin(u/R)$, $z(u)=R\cos(u/R)$ $\Rightarrow$ $K=1/R^{2}$.
\item Catenoid: $r(u)=\cosh u$, $z(u)=u$ $\Rightarrow$ $K=-1/\cosh^{4}u$.
\end{itemize}
\end{example}

\begin{example}[Products of space forms $S^a(\rho_a)\times S^b(\rho_b)$]
Let $M=S^a(\rho_a)\times S^b(\rho_b)$ with the product metric, $a,b\ge1$, $n=a+b$, and set
\[
\kappa_a=\rho_a^{-2},\qquad \kappa_b=\rho_b^{-2},
\]
so that each factor has constant sectional curvature $\kappa_a$ and $\kappa_b$, respectively. In a Riemannian product:
\begin{itemize}
\item sectional curvatures of $2$-planes \emph{tangent to a single factor} equal the curvature of that factor, and
\item \emph{mixed} $2$-planes (one vector in each factor) have sectional curvature $0$.
\end{itemize}
Fix $p=(p_A,p_B)\in M$ and decompose any $d$-plane $\Pi\subset T_pM$ as an orthogonal sum
\[
\Pi=\Pi_A\oplus \Pi_B,\qquad \Pi_A\subset T_{p_A}S^a(\rho_a),\ \Pi_B\subset T_{p_B}S^b(\rho_b),
\]
with $d_1:=\dim\Pi_A$, $d_2:=\dim\Pi_B$, and $d=d_1+d_2$. Then:
\begin{align*}
\overline{\mathrm{Ric}}_{\Pi}
&=\frac{2}{d}\Bigg(\binom{d_1}{2}\kappa_a+\binom{d_2}{2}\kappa_b\Bigg)
=\frac{d_1(d_1-1)\,\kappa_a+d_2(d_2-1)\,\kappa_b}{d},\\[4pt]
\overline{\mathrm{Ric}}^{\perp}_{\Pi}
&=\frac{1}{d(n-d)}\Big(d_1(a-d_1)\,\kappa_a+d_2(b-d_2)\,\kappa_b\Big).
\end{align*}
\emph{Derivation.} Mixed sectional curvatures vanish, so only $2$-planes lying in a single factor contribute. Inside $\Pi$, the number of $2$-planes in $\Pi_A$ (resp.\ $\Pi_B$) is $\binom{d_1}{2}$ (resp.\ $\binom{d_2}{2}$), yielding the intrinsic mean. For the normal mean, each $X\in\Pi_A$ sees $(a-d_1)$ normal directions in $T_{p_A}S^a(\rho_a)$ contributing $\kappa_a$, and $b$ directions in $T_{p_B}S^b(\rho_b)$ contributing $0$; symmetrically for $Y\in\Pi_B$.

\medskip
\noindent\emph{Useful special cases.}
\begin{itemize}
\item If $\Pi\subset T_{p_A}S^a(\rho_a)$ with $\dim\Pi=d$ (\(d_1=d\), \(d_2=0\)):
\[
\overline{\mathrm{Ric}}_{\Pi}= (d-1)\kappa_a,\qquad
\overline{\mathrm{Ric}}^{\perp}_{\Pi}=\frac{a-d}{\,n-d\,}\,\kappa_a.
\]
\item If $\Pi\subset T_{p_B}S^b(\rho_b)$ with $\dim\Pi=d$ (\(d_1=0\), \(d_2=d\)):
\[
\overline{\mathrm{Ric}}_{\Pi}= (d-1)\kappa_b,\qquad
\overline{\mathrm{Ric}}^{\perp}_{\Pi}=\frac{b-d}{\,n-d\,}\,\kappa_b.
\]
\item If $\Pi=\mathrm{span}\{X,Y\}$ with $X\in T_{p_A}S^a(\rho_a)$, $Y\in T_{p_B}S^b(\rho_b)$ (\(d_1=d_2=1\), \(d=2\)):
\[
\overline{\mathrm{Ric}}_{\Pi}=0,\qquad
\overline{\mathrm{Ric}}^{\perp}_{\Pi}=\frac{(a-1)\,\kappa_a+(b-1)\,\kappa_b}{2(n-2)}.
\]
\end{itemize}
These formulas plug directly into the Bochner curvature term $d(n-d)\,\overline{\mathrm{Ric}}^{\perp}_{\Pi}$ for simple $d$-vectors $V$ spanning $\Pi$.
\end{example}

\begin{example}[Warped products: intrinsic and normal means]\label{ex:warped-means}
Let $M=B^b\times_f F^m$ with metric $g=g_B+(f\circ\pi_B)^2 g_F$, $n=b+m$, and $f:B\to\mathbb{R}_{>0}$.
All vectors below are \emph{unit in $(M,g)$}. Gradients/Hessians of $f$ are taken on $(B,g_B)$.
At $p=(p_B,p_F)$ the sectional curvatures (Bishop--O'Neill; see \cite{BishopONeill1969,oneill1983semi}) satisfy, for $X,Y\in T_{p_B}B$ and $U,V\in T_{p_F}F$,
\[
K(X,Y)=K_B(X,Y),\qquad
K(U,V)=\frac{1}{f^2}\!\left(K_F(U,V)-|\nabla f|^2\right),\qquad
K(X,U)=-\frac{\mathrm{Hess} f(X,X)}{f}.
\]
Let $\Pi\subset T_pM$ decompose orthogonally as $\Pi=\Pi_B\oplus\Pi_F$ with $\dim\Pi_B=d_1$, $\dim\Pi_F=d_2$, and $d=d_1+d_2$.
Choose orthonormal frames $\{e_i\}_{i=1}^{d_1}\subset\Pi_B$, $\{u_\alpha\}_{\alpha=1}^{d_2}\subset\Pi_F$, and complete to orthonormal frames
$\{b_\beta\}_{\beta=1}^{b-d_1}\subset \Pi_B^\perp\cap T_{p_B}B$ and $\{w_\gamma\}_{\gamma=1}^{m-d_2}\subset \Pi_F^\perp\cap T_{p_F}F$.

\medskip
\noindent\emph{(A) Pure base: $\Pi\subset T_{p_B}B$ ($d_1=d$, $d_2=0$).}
\[
\overline{\mathrm{Ric}}_{\Pi}
=\frac{2}{d(d-1)}\sum_{1\le i<j\le d}K_B(e_i,e_j),\qquad
\overline{\mathrm{Ric}}^{\perp}_{\Pi}
=\frac{1}{d(n-d)}\!\left[\sum_{i=1}^{d}\sum_{\beta=1}^{b-d}\!K_B(e_i,b_\beta)\;-\;\frac{m}{f}\,\mathrm{tr}_{\Pi}(\mathrm{Hess} f)\right].
\]

\noindent\emph{(B) Pure fiber: $\Pi\subset T_{p_F}F$ ($d_1=0$, $d_2=d$).}
\[
\overline{\mathrm{Ric}}_{\Pi}
=\frac{2}{d(d-1)}\sum_{\alpha<\beta}\frac{1}{f^2}\Big(K_F(u_\alpha,u_\beta)-|\nabla f|^2\Big)
=\frac{1}{f^2}\,\overline{\mathrm{Ric}}^{F}_{\Pi}\;-\;\frac{|\nabla f|^2}{f^2},
\]
\[
\overline{\mathrm{Ric}}^{\perp}_{\Pi}
=\frac{1}{d(n-d)}\!\left[\frac{1}{f^2}\sum_{\alpha=1}^{d}\sum_{\gamma=1}^{m-d}K_F(u_\alpha,w_\gamma)\;-\;\frac{d(m-d)}{f^2}|\nabla f|^2\;-\;\frac{d}{f}\,\mathrm{tr}_{T_{p_B}B}(\mathrm{Hess} f)\right].
\]

\noindent\emph{(C) Mixed $2$–planes: $d=2$ with $\Pi=\mathrm{span}\{X,U\}$, $X\in T_{p_B}B$, $U\in T_{p_F}F$.}
Since $d=2$, $\overline{\mathrm{Ric}}_{\Pi}=K(\Pi)$, hence
\[
\overline{\mathrm{Ric}}_{\Pi}=K(X,U)=-\frac{\mathrm{Hess} f(X,X)}{f}.
\]
Writing $n-d=b+m-2$ and completing $X$ in $B$ to $\{X,b_\beta\}$ and $U$ in $F$ to $\{U,w_\gamma\}$,
{\small
\[
\overline{\mathrm{Ric}}^{\perp}_{\Pi}
=\frac{1}{2(b+m-2)}\!\left[\sum_{\beta=1}^{\,b-1}\!K_B(X,b_\beta)
+\frac{1}{f^2}\sum_{\gamma=1}^{\,m-1}\!\Big(K_F(U,w_\gamma)-|\nabla f|^2\Big)
-\frac{m-1}{f}\mathrm{Hess} f(X,X)-\frac{1}{f}\sum_{\beta=1}^{\,b-1}\mathrm{Hess} f(b_\beta,b_\beta)\right].
\]}

\noindent\emph{(D) Mixed $d>2$–planes: general split $\Pi=\Pi_B\oplus\Pi_F$ with $d_1,d_2\ge1$, $d=d_1+d_2$.}
\[
\overline{\mathrm{Ric}}_{\Pi}
=\frac{2}{d(d-1)}\!\left[
\sum_{1\le i<j\le d_1}K_B(e_i,e_j)
+\frac{1}{f^2}\!\sum_{1\le \alpha<\beta\le d_2}\!\Big(K_F(u_\alpha,u_\beta)-|\nabla f|^2\Big)
-\frac{d_2}{f}\,\mathrm{tr}_{\Pi_B}(\mathrm{Hess} f)
\right].
\]

{\footnotesize
\[
\overline{\mathrm{Ric}}^{\perp}_{\Pi}
=\frac{1}{d(n-d)}\!\left[
\sum_{i=1}^{d_1}\sum_{\beta=1}^{b-d_1}\!K_B(e_i,b_\beta)
+\frac{1}{f^2}\sum_{\alpha=1}^{d_2}\sum_{\gamma=1}^{m-d_2}\! \Big(K_F(u_\alpha,w_\gamma)-|\nabla f|^2\Big)
-\frac{m-d_2}{f}\,\mathrm{tr}_{\Pi_B}(\mathrm{Hess} f)
-\frac{d_2}{f}\,\mathrm{tr}_{\Pi_B^\perp}(\mathrm{Hess} f)
\right],
\]}
where $\mathrm{tr}_{\Pi_B}(\mathrm{Hess} f)=\sum_{i=1}^{d_1}\mathrm{Hess} f(e_i,e_i)$ and
$\mathrm{tr}_{\Pi_B^\perp}(\mathrm{Hess} f)=\sum_{\beta=1}^{b-d_1}\mathrm{Hess} f(b_\beta,b_\beta)$.
\end{example}

\section*{Appendix A. Proof of Proposition~\ref{prop:Rd-simple}}

We use the sign convention
\[
R(X,Y)Z=\nabla_X\nabla_YZ-\nabla_Y\nabla_XZ-\nabla_{[X,Y]}Z,
\]
so that the sectional curvature of the $2$-plane spanned by orthonormal $e_i,e_j$ is
$K(e_i,e_j)=R_{ijij}$.

  Let $\{e_1,\dots,e_n\}$ be a local orthonormal frame with $e_i=X_i$ for
  $1\le i\le d$ and $e_{d+\alpha}=n_\alpha$ for $1\le\alpha\le n-d$. Let
  $\{e^1,\dots,e^n\}$ be the dual coframe. It is convenient to identify $V$ with
  the covariant $d$-form $\omega=e^1\wedge\cdots\wedge e^d$, since the
  Weitzenb\"ock operator is classically written on forms.

  For a $d$-form $\omega$, the curvature endomorphism in the Weitzenb\"ock formula is
  \begin{equation}\label{eq:Weitzenbock-p}
    (\mathcal R_d\omega)_{i_1\cdots i_d}
    =\sum_{s=1}^d \mathrm{Ric}_{i_s}{}^{j}\,\omega_{i_1\cdots j\cdots i_d}
    -\sum_{1\le s<t\le d} R_{i_si_t}{}^{jk}\,\omega_{jk\,i_1\cdots\widehat{i_s}\cdots\widehat{i_t}\cdots i_d}.
  \end{equation}
  Because the wedge basis is orthonormal, $\langle\mathcal R_d\omega,\omega\rangle$ is the sum of the
  coefficients that keep the multi-index $\{i_1,\dots,i_d\}$ unchanged.

  Apply \eqref{eq:Weitzenbock-p} to $\omega=e^1\wedge\cdots\wedge e^d$.
  \medskip

  \noindent\emph{(a) Ricci part.}
  In the first sum, only $j=i_s$ contributes (otherwise the multi-index changes), so
  \[
    \sum_{s=1}^d \mathrm{Ric}_{i_s}{}^{j}\,\omega_{i_1\cdots j\cdots i_d}\;\cdot\;\omega_{i_1\cdots i_d}
    \;=\;\sum_{s=1}^d \mathrm{Ric}_{i_s i_s}
    \;=\;\sum_{i=1}^d \mathrm{Ric}(X_i,X_i).
  \]

  \noindent\emph{(b) Riemann part.}
  In the second sum, the only contributions that preserve the multi-index are when
  $\{j,k\}=\{i_s,i_t\}$. The ordered pairs $(j,k)=(i_s,i_t)$ and $(j,k)=(i_t,i_s)$
  both occur and contribute with opposite signs in $\omega_{jk\cdots}$ but also with
  $R_{i_si_t i_t i_s}=-R_{i_si_t i_si_t}$, producing a factor of~$2$:
  \[
    -\sum_{1\le s<t\le d} R_{i_si_t}{}^{jk}\,\omega_{jk\cdots}\;\cdot\;\omega_{i_1\cdots i_d}
    \;=\;-2\sum_{1\le s<t\le d} R_{i_si_t i_si_t}
    \;=\;-2\sum_{1\le s<t\le d} K(e_{i_s},e_{i_t}).
  \]

  Combining (a) and (b),
  \begin{equation}\label{eq:Rd-expanded}
    \big\langle \mathcal R_d\omega,\omega\big\rangle
    =\sum_{i=1}^d \mathrm{Ric}(e_i,e_i)\;-\;2\sum_{1\le i<j\le d} K(e_i,e_j).
  \end{equation}
  Finally, expand $\mathrm{Ric}(e_i,e_i)=\sum_{j\ne i}K(e_i,e_j)$ and split the sum over $j$
  into $j\in\{1,\dots,d\}$ and $j\in\{d+1,\dots,n\}$:
  \[
    \sum_{i=1}^d \mathrm{Ric}(e_i,e_i)
    =2\sum_{1\le i<j\le d}K(e_i,e_j)\;+\;\sum_{i=1}^d\sum_{\alpha=1}^{n-d} K(e_i,n_\alpha).
  \]
  Subtracting the $2\sum_{i<j}K(e_i,e_j)$ term in \eqref{eq:Rd-expanded} leaves exactly the mixed sum,
  which is \(\sum_{i,\alpha}K(X_i,n_\alpha)\). This proves \eqref{eq:Rd-on-simple} for forms; via the musical
  isomorphism the same identity holds for $V\in\Lambda^dT_pM$.
  The Bochner identity \eqref{eq:bochner-simple-d} is the standard
  \(\frac12\Delta|V|^2=\langle \Delta_{H} V,V\rangle-|\nabla V|^2\) with
  \(\Delta_{H}=\nabla^*\nabla+\mathcal R_d\).

\begin{remark}
  For $d=2$ the formula reduces to
  $\langle \mathcal R_2(X\wedge Y),X\wedge Y\rangle=\mathrm{Ric}(X,X)+\mathrm{Ric}(Y,Y)-2K(X,Y)
  =2(n-2)\,\overline{\mathrm{Ric}}^{\perp}_{\Pi}$.
\end{remark}

\bibliographystyle{plain}
\bibliography{no9_paper}

\end{document}